\newtheorem{theorem}{Theorem}[section]
\newtheorem{definition}{Definition}[section]
\newtheorem{corollary}{Corollary}[section]
\theoremstyle{remark}
\newtheorem{remark}{Remark}[section]
\theoremstyle{definition}
\newtheorem{example}{Example}
\newcommand\abs[1]{\left|#1\right|}
\providecommand{\keywords}[1]{\textbf{Keywords: } #1}
\newcommand{\E}{\mathbb{E}}
\newcommand{\e}{\epsilon}
\newcommand{\R}{\mathbb{R}}
\newcommand{\U}{\mathcal{U}_0}
\title{Feedback optimal controllers for the Heston model}
\author{Viorel Barbu  - A.I. Cuza University, Iasi, Romania\\ \and 
Chiara Benazzoli - Dept. of Mathematics, University of Trento \\ \and Luca Di Persio - Dept. of Computer Science, University of Verona}
\date{}
\numberwithin{equation}{section}
\newcommand{\norm}[1]{\left\lVert#1\right\rVert}
\begin{document}

\maketitle 

\keywords{Heston model; Stochastic Control; Feedback Controller;\\ Hamilton-Jacobi equations; Nonlinear parabolic equations\\}

\begin{abstract} We prove the existence of an optimal feedback controller for a stochastic optimization problem constituted by a variation of the Heston model, where a stochastic input process is added in order to minimize a given performance criterion.
The stochastic feedback controller is searched by solving a nonlinear backward parabolic equation for which one proves the existence of a martingale solution.
\end{abstract}

\section{Introduction}

Stochastic volatility models (SVMs) are widely used within a pletora of  financial settings, spanning from the risk sector, to the interest rates one, from econometric problems, to insurance ones, see, e.g., \cite{CordoniDipersioBackward, CordoniDipersioVasicek, Grzelak, HaastrechtPelsser,Jones, YiLiViensZeng}, and references therein.
In fact, SVMs allow for a finer analysis of relevant time series as they appear in the real-world financial arenas. Indeed, daily return data series show two peculiarities among different types of assets in different markets and in different periods, namely the \emph{volatility clustering phenomenon} 
and the \emph{fat-tailed and highly peaked distribution} relative to the normal distribution, as assumed, e.g., in the celebrated Black ad Scholes model.

Volatility clustering refers to the fact that \emph{large changes tend to be followed by large changes, of either sign, and small changes tend to be followed by small changes} (see \cite{Mandelbrot}). The latter means that, over a significant time window, it can be noted the presence of both high volatility period and low volatility one, separately, rather than a constant average level of volatility persisting over time.

Such peculiarities can be captured by SVMs because they are characterized by a volatility term which is itself a stochastic process. 
As an example, consider the following  price dynamic for given asset $S$, driven by a Geometric Brownian motion
\begin{equation}
\label{eq:SDES}
dS_t=\mu_t S_t\,dt+\sqrt{\nu_t}S_t\,dW_t
\end{equation}
where
\begin{equation}
\label{eq:SDEnu}
d\nu_t=k_t\,dt+\sigma_t\,dB_t
\end{equation}
and where $W$ and $B$ denote two Brownian motions with correlation $\rho$.
In this case, the volatility of the price is no longer a deterministic function of $S$, but it is itself randomly distributed. Different choices for \eqref{eq:SDEnu} allow for a multitude of models that  properly represent different financial data, see, e.g, \cite{Barndorff,CIR,Hull,Scott,Wiggins}.
In this context we focus our attention on the Heston model, firstly introduced in \cite{Heston} to price European bond and currency options, aiming at generalize and  overcome the biases of the Black ad Scholes model. We recall that the Heston model assumes that the asset price follows the dynamic in \eqref{eq:SDES}, where $\nu$ follows the Ornstein-Uhlenbeck process
\[
d\nu_t=k(\theta-\nu_t)\,dt+\sigma\sqrt{\nu_t}\,dB_t\;,
\] 
explaining the volatility smile.
In what follows we focus our attention on a controlled version of the Heston model, namely adding a control component w.r.t. to the volatility term appearing in \eqref{eq:SDES}. This approach is mainly intended to take in account exogenous influence of external financial actors within a given investment setting. This is the case
of the possible action of a Central Bank which aims at minimizing the probability of abrupt changes within markets where a relevant group of banks are exposed.
In this scenario, the role played by the  Central Bank can be realized putting money in the market by means of actions. Recently, such type of wide breath action has been concretized by the European Central Bank which has adopted the so called quantitative easing monetary policy. In this case, the volatility control has been implemented buying a predetermined amount of financial assets emitted mainly  from (national) commercial banks . This results in: a rise of the prices of the interested  financial assets, and a  raise of interested assets prices lowering their yield and simultaneously increasing the money supply, with the final result of a drastic reduction of the volatility terms due to the instability of some European regions, in general, and concerning particular, highly exposed, banks of well determined states.

Analogously one can see the aforementioned control issue from the point of view of maximizing  the expected discounted utility of consumption and terminal wealth, as, e.g., made in \cite{ChangRong} w.r.t. a fixed finite investment horizon. 
On the other hand, as pointed out in \cite{Kraft}, solving   the portfolio problem for Heston's stochastic volatility model may lead  that  partial equilibrium can be obtained  only under a specific condition on the model parameters, particularly w.r.t. the specification of the market price of risk, which means that  the market price of risk has to  be specified with great care.

The main aim of this paper is to study an optimization problem where the performance function we want to minimize depends on a stochastic process whose dynamic is modeled through a \emph{controlled} variation of the Heston model. Namely, we assume that a controller is added in the volatility component in eq.~\eqref{eq:SDES}. 
The present work is structured as follows: in Section~\ref{Sec:HestonModel} we give the necessary details to introduce the problem, also providing the well-posedness of this stochastic model; in Section~\ref{Sec:dpe} we introduce the associated Hamilton-Jacobi equation which is reduced to a nonlinear parabolic equation on $(0,\infty)\times(0,\infty)$. The main result of this paper is the existence and uniqueness of solutions for this equation. In Section~\ref{sec:optimalfeedback} we derive the existence of an optimal controller in feedback form. 

\section{The Heston control model}\label{Sec:HestonModel}
Let $(\Omega,\mathcal{F},\mathbb{P})$ be a probability space and $W_1$, $W_2$ be Brownian motions. Let $(\mathcal{F}_t)_{t\ge0}$ be the natural filtration generated by $W:=(W_1,W_2)$.
Consider the controlled stochastic system
\begin{equation}
\label{sys:model}
\begin{cases}
dX_1=\mu X_1\,dt+X_1\sqrt{u X_2}\,dW_1\,,\quad t\in(0,T)\\
dX_2=k(\theta-X_2)\,dt+\sigma\sqrt{X_2}\,dW_2\,,\quad t\in(0,T)\\
X_1(0)=X_1^0\,,\quad X_2(0)=X_2^0\,,
\end{cases}
\end{equation}
where $X^0_i\ge0$ $i=1,2$ and $W_1$,$W_2$ are 1D-Brownian motions.
Here $\mu$, $\kappa$, $\sigma$, $\theta$ are positive constants.

We note that \eqref{sys:model}  extends the classical Heston model, by adding a controlled component in the volatility of the primary stochastic process $X_1$.  
The control parameter $u$ is a $(\mathcal{F}_t)_{t\ge0}$-adapted stochastic process $u:[0,T]\to\mathbb{R}$ which is assumed to take values in the interval $U=[a,b]$, where $0<a<b<\infty$.

We associate with the control system \eqref{sys:model} a performance function
\begin{equation}
\label{eq:performance}
J(u;X_1^0,X_2^0)=\E\int_0^T X_1^2(t)f(X_1(t),u(t))\,dt+\E g(X_1(T))
\end{equation}
and denote by $\mathcal{U}_0$ the class of stochastic control processes $u:[0,T]\to U$.\\

The function $f:\R\times\R\to\R$ is assumed to satisfy the following hypotheses.
\begin{enumerate}[(i)]
\item  $f$ is continuous on $\R^2$ and for each $x\in\R$, $u\mapsto f(x,u)$ is convex. Moreover, $\inf\{f(x,u); u\in[a,b]\}=0$ for all $x\in\R$.
\end{enumerate}
In the following we denote by $f_u$ the sub-differential of the function $u\mapsto f(\cdot,u)$.

The optimal control problem we consider here is the following:
\begin{description}
\item[(P)] \label{pbm} Minimize $J(u;X^0_1,X^0_2)$ on the set of all stochastic processes $u\in\U$ satisfying $\eqref{sys:model}$.
\end{description}
Our objective  is to find an optimal feedback controller $u(t)$ for problem (P) via the Hamilton-Jacobi (dynamic programming) equation.

A first problem regards the well posedness of state system \eqref{sys:model}. It is clear that the strong solution $X_2$, if exists, should be found in class of nonnegative processes on $[0,T]$. However since the function $x\to\sqrt{x}$ is not Lipschitz it is not clear if such a solution exists for all $X^0_2\ge0$. However, we have the following existence result for a martingale solution $X=(X_1,X_2)$ to \eqref{sys:model}.
\begin{theorem}\label{thm:existence}
Assume that 
\begin{equation}
\label{eq:ic}
k\theta\ge\frac{1}{2}\sigma^2\,,\quad X^0_2\ge0\,.
\end{equation}
Then there is at least one martingale solution $X=(X_1,X_2)$ for \eqref{sys:model} in \\$(L^2(\Omega;C([0,T])))^2$ such that
\begin{equation}
\label{eq:pos}
X_1(t)\,,\,X_2(t)\ge0\,,\quad\forall t\in[0,T]\,,\quad\text{$\mathbb{P}$-a.s.}
\end{equation}
Moreover, $X_1(t)\ge0$, $\forall t\in[0,T]$, $\mathbb{P}$-a.s. and
\begin{equation}
\label{eq:boundX2}
\E\sup_{t\in[0,T]}\left(\abs{X_1(t)}^2+\abs{X_2(t)}^2\right)\le\bar C(\abs{X_1^0}^2+\abs{X_2^0}^2)+CT
\end{equation}
for suitable constants $\bar C,C>0$.
\end{theorem}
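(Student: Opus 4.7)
The system decouples: the CIR-type equation for $X_2$ does not involve $u$ or $X_1$, so I would first construct $X_2$ and then treat the equation for $X_1$ as a linear SDE with random coefficients. The only essential obstruction to standard existence/uniqueness theory for the CIR equation is that $x\mapsto\sqrt{x}$ is not Lipschitz near $0$; everything else is routine.

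For $X_2$, I would approximate the diffusion coefficient by a Lipschitz one, e.g.\ solve
\begin{equation*}
dX_2^\varepsilon=k(\theta-X_2^\varepsilon)\,dt+\sigma\sqrt{(X_2^\varepsilon)^+ +\varepsilon}\,dW_2,\qquad X_2^\varepsilon(0)=X_2^0,
\end{equation*}
by the standard Lipschitz theory, derive uniform-in-$\varepsilon$ a priori bounds in $L^2(\Omega;C([0,T]))$ by applying It\^o to $(X_2^\varepsilon)^2$ and using Burkholder--Davis--Gundy, and then extract a weak limit (via a Skorokhod representation) which is a martingale solution of the CIR equation. Pathwise uniqueness follows from the Yamada--Watanabe criterion with $\rho(x)=\sqrt{x}$, so this martingale solution is in fact strong. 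Nonnegativity of $X_2$ on $[0,T]$ under the Feller condition $k\theta\ge\tfrac12\sigma^2$ is classical: applying It\^o to $\log(X_2+\delta)$ on a stopping time up to the boundary and letting $\delta\downarrow 0$ shows that $\{X_2=0\}$ is unattainable when $2k\theta\ge\sigma^2$.

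Once $X_2$ is in hand, the first SDE in \eqref{sys:model} is a \emph{linear} equation in $X_1$ with random but bounded drift coefficient $\mu$ and diffusion coefficient $\sqrt{u(s)X_2(s)}$, the latter progressively measurable and in $L^2([0,T]\times\Omega)$ because $u\le b$ and $X_2\in L^2(\Omega;C([0,T]))$. Hence it admits the explicit Dol\'eans--Dade form
\begin{equation*}
X_1(t)=X_1^0\exp\!\Bigl(\mu t-\tfrac12\!\int_0^t\!u(s)X_2(s)\,ds+\int_0^t\!\sqrt{u(s)X_2(s)}\,dW_1(s)\Bigr),
\end{equation*}
which defines a strong, a.s.\ nonnegative (positive if $X_1^0>0$) solution on the same filtered space where $X_2$ lives, and so gives a martingale solution $(X_1,X_2)$ of the coupled system.

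Finally, the moment bound \eqref{eq:boundX2} is obtained by applying It\^o to $X_i^2$ and combining BDG with Gronwall. For $X_2^2$,
\begin{equation*}
d(X_2^2)=\bigl[(2k\theta+\sigma^2)X_2-2kX_2^2\bigr]dt+2\sigma X_2^{3/2}\,dW_2,
\end{equation*}
and taking suprema, expectations, and BDG on the martingale term readily gives $\E\sup_{t\le T}X_2^2(t)\le C(|X_2^0|^2+T)$. For $X_1^2$ the It\^o identity is $d(X_1^2)=(2\mu+uX_2)X_1^2\,dt+2X_1^2\sqrt{uX_2}\,dW_1$, and here the main technical point is the coupling term $uX_2\cdot X_1^2$: I would use the stopping times $\tau_n=\inf\{t:|X_1(t)|^2+|X_2(t)|^2\ge n\}\wedge T$, apply Gronwall on $[0,\tau_n]$ using $u\le b$ and the already-established $L^2$-bound on $X_2$, and then let $n\to\infty$. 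The hardest part of the whole argument is really the first step, namely turning the non-Lipschitz CIR equation into a genuine martingale solution; after that the rest is bookkeeping.
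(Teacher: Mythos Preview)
Your approach is correct and close in spirit to the paper's, but organized differently. You exploit the triangular structure: construct the CIR process $X_2$ first (Lipschitz approximation, uniform bounds, tightness/Skorohod, then Yamada--Watanabe to upgrade to a strong solution), establish nonnegativity at the limit via the classical Feller boundary argument with $\log(X_2+\delta)$, and only then solve the linear $X_1$-equation explicitly on the same filtered space via the Dol\'eans--Dade exponential. The paper instead approximates the pair $(X_1^\varepsilon,X_2^\varepsilon)$ jointly, choosing the specific regularization $\sigma X_2^\varepsilon/\sqrt{|X_2^\varepsilon|+\varepsilon}$ for the CIR diffusion; because this coefficient vanishes at $0$, the paper can apply It\^o to $\tfrac12(x^-)^2$ and obtain $X_2^\varepsilon\ge0$ already at the approximate level (this is precisely where the Feller condition $k\theta\ge\tfrac12\sigma^2$ enters the paper's computation), and then passes to the limit in both equations at once via a single Skorohod step. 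Your route is cleaner and actually yields a \emph{strong} solution rather than merely a martingale one, at the price of invoking Yamada--Watanabe; the paper's route is self-contained but ends only with a weak solution. One caveat: your Gronwall argument for the $X_1$-moment bound \eqref{eq:boundX2} is delicate because the coupling term $uX_2X_1^2$ makes the Gronwall exponent random (it involves $\int_0^T X_2\,ds$), so an $L^2$ bound on $X_2$ alone does not immediately close the estimate; the paper is equally informal at this step.
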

By martingale solution $X=(X_1,X_2)$ to \eqref{sys:model} we mean an $(\mathcal{F}_t)_{t\ge 0}$-adapted process $X$ in a filtered probability space $(\Omega, \mathcal{F},(\mathcal{F}_t)_{t\ge0})$ which is pathwise continuous on $[0,T]$ and satisfies the equations
\begin{equation}
\label{def:martsol}
\begin{cases}
X_1(t)=X^0_1+\int_0^t \mu X_1(s)\,ds+\int_0^t X_1(s)\sqrt{u(s) X_2(s)}\,dW_1(s)\,,\quad t\in(0,T)\,,\\
X_2(t)=X^0_2+\int_0^tk(\theta-X_2(s))\,ds+\int_0^t\sigma\sqrt{X_2(s)}\,dW_2(s)\,,\quad t\in(0,T)\,.
\end{cases}
\end{equation}
together with a Wiener process $W=(W_1,W_2)$.
\begin{proof}
We approximate the second equation in \eqref{sys:model} by
\begin{equation}
\label{eq:eps}
\begin{aligned}
&dX^\e_2=k(\theta-X^\e_2)\,dt+\sigma\frac{X^\e_2}{\sqrt{|X^\e_2|+\e}}\,dW_2\,,\quad t\in(0,T)\,,\\
&X^\e_2(0)=X^2_0\,,
\end{aligned}
\end{equation}
and associate to \eqref{eq:eps} the equation 
\begin{equation}
\label{eq:eps1}
\begin{cases}
dX^\e_1=\mu X^\e_1\,dt+X^\e_1\sqrt{u X^\e_2}\,dW_1\,,\quad t\in(0,T)\,,\\
X_1^\e(0)=X_1^0\,.
\end{cases}
\end{equation}

We shall prove first that under assumption \eqref{eq:ic}, we have 
\begin{equation}
\label{eq:pose}
X^\e_2(t)\ge0\,,\quad\forall t\in[0,T]\,,\quad\text{$\mathbb{P}$-a.s.}
\end{equation}
Now we apply It\^o's formula to function $\varphi(x)=\frac{1}{2}(x^-)^2$ for all $x\in \R$ where $x^-=\max\set{0,-x}$. We have
\[
\varphi'(x)=-x^-\,,\quad\varphi''(x)=H(-x), \quad\forall x\in\R\,,
\]
where $H$ is the Heaviside function. We get
\[
\begin{split}
\frac{1}{2}d\abs{(X_2^\e)^-(t)}^2=-k\abs{(X_2^\e)^-(t)}^2\,dt-k\theta (X_2^\e)^-(t)\,dt +\sigma\frac{(X^\e_2(t)^-)^2}{\sqrt{|X^\e_2(t)|+\e}}\,dW_2(t)\\+\frac{\sigma^2}{2}\frac{((X^\e_2)^-(t))^2}{(X^\e_2)^- (t)+\e}H(-X_2^\e(t))\,dt\,.
\end{split}
\]
By \eqref{eq:ic} this yields
\[
\frac{1}{2}\E\left[\abs{(X_2^\e)^-(t)}^2+k\int_0^t \abs{(X_2^\e)^-(s)}^2\,ds\right]\le\E\left[\int_0^t\left(\frac{\sigma^2}{2}-k\theta\right)\abs{(X_2^\e)^-(t)}\,ds\right]\le0
\]
for all $t\in[0,T]$, and therefore $(X^\e_2)^-(t)=0$ on $(0,T)\times\Omega$ which implies \eqref{eq:pose} as claimed. 

Substituting $X^\e_2$ in \eqref{eq:eps1}, we get that $(X^\e_1,X^\e_2)$ is a strong solution to \eqref{sys:model}. 
Indeed, if we represent $X^\e_1$ as 
\[
X^\e_1(t)=\exp\left(\int_0^t \sqrt{u(s)X^\e_2(s)}dW_1(s)\right)y_\e(t)\,,
\]
we have by It\^o's formula that
\begin{align*}
dX^\e_1(t)&=\exp\left(\int_0^t \sqrt{u(s)X^\e_2(s)}dW_1(s)\right)dy_\e(t)+\\&\hspace{1cm}
+\sqrt{u(t)X^\e_2(t)}\exp\left(\int_0^t \sqrt{u(s)X^\e_2(s)}dW_1(s)\right)y_\e(t)\,dW_1(t)+\\
&\hspace{1cm}
+\frac{1}{2}u(t)X^\e_2(t)\exp\left(\int_0^t \sqrt{u(s)X^\e_2(s)}dW_1(s)\right)y_\e(t)dt
\end{align*}
and substituting in \eqref{eq:eps1} we get for $y_\e$ the random differential equation
\begin{align*}
&\frac{dy_\e}{dt}=\left(\mu-\frac{1}{2}uX^\e_2\right)y_\e\,,\quad t\in(0,T)\\
&y(0)=X_1^0\,,
\end{align*}
which clearly has a unique $(\mathcal{F}_t)_{t\ge0}$-adapted solution $y_\e$.
Therefore,
\begin{equation}
\label{eq:Xe}
X^\e_1(t)=\exp\left(\int_0^t \left(\mu-\frac{1}{2}u(s)X^\e_2(s)\right)\,ds+\int_0^t \sqrt{u(s)X^\e_2(s)}dW_1(s)\right)X^0_1
\end{equation}
is together with $X^\e_2$ the solution to \eqref{eq:eps}-\eqref{eq:eps1}.
The uniqueness of such a solution is immediate. By \eqref{eq:Xe} it follows also that $X_1\ge 0$ as claimed.

By It\^o's formula we see that
\[
\begin{split}
\frac{1}{2}\abs{X_2^\e(t)}^2=\frac{1}{2}\abs{X_2^0}^2+\int_0^t k(\theta-X^\e_2(s))X_2^\e(s)\,ds+\frac{\sigma^2}{2}\int_0^t \frac{(X^\e_2(s))^2}{X^\e_2(s)+\e}\,ds+\\
+\int_0^t \frac{\sigma(X^\e_2(s))^2}{\sqrt{X^\e_2(s)+\e}}\,dW_2(s)
\end{split}
\]
and this yield via Burkholder-Davis-Gundy Theorem (see e.g. \cite{DaPrato})
\[
\E\sup\{\abs{X_2^\e(t)}^2\,;\,t\in[0,T]\}\le C_1(1+\abs{X_2^0}^2)\,.
\]
We have also by \eqref{eq:eps}
\[
\E\left[\abs{X_2^\e(t)-X_2^\e(s)}^2\right]\le C\E\left[\int_s^t(1+\abs{X_2^\e(r)}^2)\,dr\right]\le C_2(t-s)\quad\forall s,t\in[0,T]\,.
\]
Similarly by \eqref{eq:eps1}, we get
\begin{gather*}
\E\sup\{\abs{X_1^\e(t)}^2\,;\,t\in[0,T]\}\le C_3(1+\abs{X_1^0}^2)\,,\\
\E\left[\abs{X_1^\e(t)-X_1^\e(s)}^2\right]\le  C_4(t-s)\quad\forall s,t\in[0,T]\,.
\end{gather*}
Hence we have
\begin{gather}
\label{supXe} \E\sup\{\abs{X^\e(t)}^2\,;\,t\in[0,T]\}\le C_5(1+\abs{X_1^0}^2+\abs{X_2^0}^2)\,,\\
\label{stXe}\E\left[\abs{X^\e(t)-X^\e(s)}^2\right]\le  C_6(t-s)\quad\forall s,t\in[0,T]\,.
\end{gather}
for $X^\e=(X^\e_1,X^\e_2)$.

We set $\nu_\e=\mathscr{L}(X^\e)$,  there is, $\nu_\e(\Gamma)=\mathbb{P}(X^\e\in\Gamma)$ for each Borelian set $\Gamma\subset(C([0,T];\R))^2$ and note that $\{\nu_\e\}_{\e>0}$ is tight in $C([0,T];\R^2)$. 
This means that for each $\delta>0$ there is a complete set $\Gamma \subset (C([0,T];\R^2))$ such that $\nu_\e(\Gamma^c)\le\delta$ for all $\e>0$. We take $\Gamma$ of the form
\[
\begin{split}
\Gamma_{r,\gamma}=\{y\in C([0,T];\R^2)\,: \, \abs{y(t)}\le r\,, \forall t\in[0,T]\,, \quad\\
\abs{y(t)-y(s)}\le \gamma\abs{t-s}^\frac{1}{2}\,, \forall t,s\in[0,T]\}\,.
\end{split}
\]
Clearly by the Ascoli-Arzela theorem, $\Gamma_{r,\gamma}$ is compact in $C([0,T];\R^2)$. On the other hand, by \eqref{supXe}-\eqref{stXe} and the well known inequality
\[
\rho\mathbb{P}[\abs{Y}\ge\rho]\le\E\abs{Y}\,,\quad\forall\rho>0
\]
it follows that there are $r,\gamma$ independent of $\e$ such that $\nu_\e(\Gamma^c_{r,\gamma})\le\delta$ as claimed.
Then by the Skorohod's theorem there is a probability space $ (\tilde \Omega,\mathcal{\tilde F},\mathbb{\tilde P})$ and random variables $\tilde X,\tilde X^\e$ such that $\mathscr{L}(\tilde X^\e)=\mathscr{L}( X^\e)$ and $\mathbb{\tilde P}$-a.e. $\omega\in\tilde\Omega$\,,
\[
\tilde X_\e\to\tilde X\quad\text{in $C([0,T];\R^2)$.}
\]
Then we may pass to limit in \eqref{eq:eps}-\eqref{eq:eps1} and see that $\tilde X=(\tilde X_1,\tilde X_2)$ satisfies system \eqref{sys:} in the space $ (\tilde \Omega,\mathcal{\tilde F},\mathbb{\tilde P})$ for a new pair $\tilde W=(\tilde W_1,\tilde W_2)$ of Wiener processes in this space. See \cite{Barbuetal} for details. This completes the proof of existence of a martingale solution. Clearly \eqref{eq:pos},\eqref{eq:boundX2} hold for this solution which in the following we shall denote again $X=(X_1,X_2)$, $W=(W_1,W_2)$ and $(\Omega,\mathcal{F},\mathbb{P})$.
\end{proof}

\section{The dynamic programming equation}\label{Sec:dpe}
Now we are going to study the Hamilton Jacobi equation for problem (P) and design on this basis an optimal closed loop (feedback) controller $u$. To this end we associate to problem $(P)$ the optimal value function $V:[0,T]\times\R\times\R\to\R$
\begin{equation}
\label{eq:value}
V(t,x,y)=\inf_{u\in\U}\left\{\E\int_t^T X_1^2(s)f(X_1(s),u(s))\,ds+\E[ g(X_1(T))]\right\}
\end{equation}
subject to the control system
\begin{equation}
\label{sys:}
\begin{cases}
dX_1(s)=\mu X_1(s)\,ds+X_1\sqrt{u(s) X_2(s)}\,dW_1(s)\,,\quad s\in(t,T)\,,\\
dX_2(s)=k(\theta-X_2(s))\,ds+\sigma\sqrt{X_2(s)}\,dW_2(s)\,,\quad s\in(t,T)\,,\\
X_1(t)=x\,,\quad X_2(t)=y\,.
\end{cases}
\end{equation}
We shall assume $x>0$, $y\ge0$ and that conditions \eqref{eq:ic} holds. Then by Theorem~\ref{thm:existence} there is a martingale solution $X=(X_1,X_2)$ such that $X_1(s)\ge0$, $X_2(s)\ge 0$, $\forall s\in[0,T]$, $\mathbb{P}$-a.s.
We consider the Hamilton-Jacobi equation associated with problem (P), namely
\begin{equation}
\label{eq:phi}
\begin{split}
\varphi_t(t,x,y)+\mu x\varphi_x(t,x,y)+k(\theta-y)\varphi_y(t,x,y)+\frac{1}{2}\sigma^2y\varphi_{yy}(t,x,y)+\\+x^2G(x,y,\varphi_{xx}(t,x,y))=0\,,\quad t\in[0,T], \quad x,y\in\R\,,\\
\varphi(T,x,y)=g(x)\,,\quad \forall x\in\R\,,
\end{split}
\end{equation}
where $G\colon\R\times\R\times\R\to\R$ is given by
\begin{equation}
\label{def:G}
G(x,y,z)=\min_u \left\{\frac{1}{2}uyz+f(x,u) ; u\in[a,b]\right\}\,,\quad\forall x,y,z\in\R\,.
\end{equation}
It is well known (see e.g. \cite{Fleming,Oksendal}) that if $\varphi$ is a smooth (of class $C^{1,2}$ for instance) solution to \eqref{eq:phi} then the feedback controller
\begin{equation}
\label{def:opt}
u(t)=\phi(t,X_1(t),X_2(t))\,,\quad t\in[0,T]\,,
\end{equation}
where
\begin{equation}
\label{def:phi}
\phi(t,x,y)=\arg\min_u \left\{\frac{1}{2}uy\varphi_{xx}(t,x,y)+f(x,u); u\in[a,b]\right\}\,,
\end{equation}
is optimal in problem (P).

It should be mentioned that in general an equation of the form \eqref{eq:phi} does not have a classical solution. The best one can expect is a viscosity solution (see e.g. \cite{Soner}) which under our conditions is not unique and also it is not smooth enough to provide a feedback controller $\phi$ of the form \eqref{def:phi}. However, as shown below we can reduce \eqref{eq:phi} by a simple argument to a nonlinear parabolic equation for which one can prove the existence and uniqueness of a strong solution. Indeed, by setting
\begin{equation}
\label{def:p}
p(t,x,y)=\varphi_x(t,x,y)\,,\quad \forall t\in[0,T],\,x,y\in\R
\end{equation}
and differentiating in $x$, we transform \eqref{eq:phi} in the second order nonlinear parabolic equation
\begin{equation}
\label{eq:p}
\begin{cases}
p_t(t,x,y)+\mu (xp(t,x,y))_x+k(\theta-y)p_y(t,x,y)+\frac{1}{2}\sigma^2yp_{yy}(t,x,y)+\\\hspace{2.5cm}+(x^2G(x,y,p_{x}(t,x,y)))_x=0\,,\quad\forall t\in[0,T], \quad x,y\in\R\,,\\
p(T,x,y)=g_x(x)\,,\quad \forall x\in\R\,,\quad y\in\R\,,
\end{cases}
\end{equation}
with natural boundary conditions at $x=\pm\infty$, $y=\pm\infty$. In terms of $p$ the feedback controller $\phi$ in \eqref{def:opt} is expressed as
\begin{equation}
\phi(t,x,y)=\arg\min_{u} \left\{ \frac{1}{2}u y p_x(t,x,y)+f(x,u); u\in[a,b]\right\}\,.
\end{equation}

Taking into account that by \eqref{eq:pos} the state $X_2$ is in the half plane $y\ge0$, we see that the flow $t\mapsto(X_1(t),X_2(t))$ leaves invariant the domain $Q=\set{(x,y)\in\R^2; 0\le y<\infty}$ and so equation \eqref{eq:p} can be treated on this domain. For simplicity we shall restrict ourselves to the domain 
\[
Q=\set{x\in\R, \rho<y<M}=\R\times(\rho,M)
\]
where $M$ is sufficient large, but finite. In other words, we shall consider equation \eqref{eq:p} on domain $(0,T)\times Q$ with boundary value condition on $\partial Q$
\begin{equation}
\label{def:bc}
p(t,x,\rho)=0\,;\quad p(t,x,M)=0\,,\quad\forall x\in\R\,,\quad t\in[0,T]\,.
\end{equation}
By estimate \eqref{eq:boundX2} we see that for $M$ large enough
the above stochastic flow remains with a high probability in $Q$ and so we may treat problem \eqref{eq:p} in such a bounded domain.
We set $H=L^2(Q)$ with the standard norm $\norm{\cdot}_H$ and define the space 
\begin{equation}
\label{def:V}
V=\set{z\in H\cap H^1_{\text{loc}}(Q)\,;\,xz_x,z_y\in L^2(Q)\,;\,z(x,\rho)=z(x,M)=0\,,\quad\forall x\in\R}
\end{equation}
where $z_x$, $z_y$ are taken in sense of distributions on $Q$. The space $V$ is a Hilbert space with the norm
\[
||{z}||_V=\left(\int_Q (z^2+x^2z_x^2+z_y^2)\,dxdy\right)^\frac{1}{2}\,,\quad\forall z\in V\,.
\]
We have $V\subset H$ algebraically and topologically and denote by $V^\star$ the dual space of $V$ having $H$ as pivot space. Denote by $\prescript{}{V^\star}{(\cdot,\cdot)}_V$ the duality $(V,V^\star)$ and by $\norm{\cdot}_{V^\star}$ the dual norm of $V^\star$. On $H\times H$, $\prescript{}{V^\star}{(\cdot,\cdot)}_V$ is just the scalar product of $H$.
\begin{definition}\label{def:weaksol} The function $p:[0,T]\times Q\to\R$ is called weak solution to problem \eqref{eq:p}-\eqref{def:bc} if the following conditions hold
\begin{align}
\label{eq:pL2}
&p\in C([0,T];H)\cap L^2([0,T];V)\,,\quad\frac{dp}{d t}\in L^2([0,T];V^\star)\,,\\
&
\begin{aligned} 
&\frac{d}{dt}\int_Q p(t,x,y)\psi(x,y)\,dx\,dy+\int_Q(\mu (xp(t,x,y))_x+k(\theta-y)p_y(t,x,y))\psi(x,y)\,dx\,dy\\&\hspace{2cm}-\frac{\sigma^2}{2}\int_Q p_{y}(t,x,y)(y\psi(x,y))_y\,dx\,dy-\\&
\hspace{2cm}-\int_Q x^2G(x,y,p_{x}(t,x,y))\psi_x(x,y)\,dx\,dy=0\,\quad \forall \psi\in V\,, {\text{ a.e. }}t\in[0,T]\,,
\end{aligned}\\
&p(T,x,y)=g_x(x)\,,\quad\forall(x,y)\in  Q\,. \label{eq:pf}
\end{align}
\end{definition}
Taking into account \eqref{def:p} and Definition~\ref{def:weaksol} we say that $\varphi$ is a weak solution to \eqref{eq:phi} if
\begin{align}
&\begin{aligned}\label{eq:phiL2}
&\varphi\in L^2([0,T];L^2_\text{loc}(\R\times\R)),\, \varphi_x\in C([0,T];H)\cap L^2([0,T];V)\,,\\
&\frac{d\varphi_x}{d t}\in L^2([0,T];V^\star)\,,
\end{aligned}\\
&
\begin{aligned} \label{eq:phiint}
&\frac{d}{dt}\int_Q \varphi_x(t,x,y)\psi(x,y)\,dx\,dy+\int_Q(\mu (x\varphi_x(t,x,y))_x+k(\theta-y)\varphi_{xy}(t,x,y))\psi(x,y)\,dx\,dy\\&\hspace{2cm}-\frac{\sigma^2}{2}\int_Q \varphi_{xy}(t,x,y)(y\psi(x,y))_y\,dx\,dy-\\&
\hspace{2cm}-\int_Q x^2G(x,y,\varphi_{xx}(t,x,y))\psi_x(x,y)\,dx\,dy=0\,\quad \forall \psi\in V\,, \quad{\text{a.e. }}t\in[0,T]\,,
\end{aligned}\\
&
\begin{aligned}\label{eq:phif}
&\varphi(T,x,y)=g(x)\,,\quad\forall(x,y)\in  Q\,, \\
&\varphi_x(t,x,\rho)=\varphi_x(t,x,M)=0\,\quad \forall x\in\R, \,t\in[0,T]\,.
\end{aligned}
\end{align}
Clearly, if $p$ is a weak solution to \eqref{eq:p}-\eqref{def:bc} then by \eqref{def:p} the function
\[
\varphi(t,x,y)=\int_\infty^x p(t,\xi,y)\, d\xi\,,\quad(t,x,y)\in[0,T]\times\mathbb{R}\times(0,M)
\]
is a weak solution to \eqref{eq:phi} and conversely if $\varphi$ is a weak solution to \eqref{eq:phi} then $p$ given by \eqref{def:p} is a weak solution to \eqref{eq:p}. It should be said that $\varphi$ is unique until an additive function $\tilde\varphi=\tilde\varphi(t,y)$.

We have
\begin{theorem}
\label{thm:uws}
Let $g_x\in L^2(\R)$. Then there is a unique weak solution $p$ to problem \eqref{eq:p}-\eqref{def:bc}.
\end{theorem}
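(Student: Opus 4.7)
The plan is to reformulate \eqref{eq:p}--\eqref{def:bc} as a forward quasi-linear parabolic Cauchy problem on the Gelfand triple $V \subset H \subset V^\star$ and to solve it by combining a Faedo--Galerkin approximation with monotone operator theory. Setting $\tau = T - t$ and $q(\tau,\cdot) = p(T-\tau,\cdot)$ turns the backward problem into the initial-value problem $q_\tau + B(q) = 0$ with $q(0,x,y) = g_x(x) \in H$, where $B \colon V \to V^\star$ is the nonlinear operator acting as
\begin{equation*}
\langle B(q), \psi\rangle = -\int_Q [\mu(xq)_x + k(\theta-y)q_y]\psi\,dx\,dy + \frac{\sigma^2}{2}\int_Q q_y(y\psi)_y\,dx\,dy + \int_Q x^2 G(x,y,q_x)\psi_x\,dx\,dy.
\end{equation*}

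The first key step is to extract the structural properties of $G$ from \eqref{def:G} and hypothesis (i). For fixed $(x,y)$, $z \mapsto G(x,y,z)$ is the infimum over $u \in [a,b]$ of maps affine in $z$, hence concave, globally Lipschitz in $z$ with constant $\tfrac{1}{2}bM$, and with a.e.\ derivative $G_z \in [\tfrac{1}{2}ay, \tfrac{1}{2}by]$. The hypothesis $\inf_u f(x,u)=0$ forces $G(x,y,0)=0$, and integrating the lower bound on $G_z$ produces the sharp coercivity $zG(x,y,z)\ge \tfrac{a\rho}{2}z^2$ together with the strong monotonicity $(G(x,y,z_1)-G(x,y,z_2))(z_1-z_2)\ge \tfrac{a\rho}{2}(z_1-z_2)^2$ for $y\ge\rho$. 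Plugging these into $\langle B(q),q\rangle$, combining with the elementary bound $\int_Q y q_y^2 \ge \rho\|q_y\|_H^2$ from the $y$-diffusion, and absorbing the first-order drift terms via Young's inequality with a small parameter, I would establish the Garding-type estimate
\begin{equation*}
\langle B(q), q\rangle \ge c_0\|q\|_V^2 - c_1\|q\|_H^2\quad\forall q\in V,
\end{equation*}
and, applied to differences, the quasi-monotonicity $\langle B(q_1) - B(q_2), q_1 - q_2\rangle \ge c_0\|q_1 - q_2\|_V^2 - c_1\|q_1 - q_2\|_H^2$. Moreover, $B$ is Lipschitz from $V$ to $V^\star$ because $G$ is Lipschitz in $z$.

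With these estimates in hand, existence follows from a standard Faedo--Galerkin scheme: pick a basis $\{e_j\}\subset V$ of $H$, solve the finite-dimensional projection for $q^n = \sum_{j=1}^n \alpha_j^n e_j$ (a Lipschitz ODE system), use the energy identity together with Garding and Gronwall to obtain uniform bounds on $q^n$ in $L^\infty(0,T;H)\cap L^2(0,T;V)$ and on $q^n_\tau$ in $L^2(0,T;V^\star)$, extract a subsequence via the Aubin--Lions lemma that converges strongly in $L^2(0,T;H)$ and weakly in $L^2(0,T;V)$, and identify the nonlinear limit by the Minty--Browder trick, whose applicability is guaranteed by the monotonicity of the nonlinear part of $B$. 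Reversing the time change produces the weak solution $p$ of \eqref{eq:p}--\eqref{def:bc}. Uniqueness is then immediate: for two solutions $p_1, p_2$, setting $\phi = p_1 - p_2$ and invoking the quasi-monotonicity in the time-reversed equation yields $\tfrac{d}{d\tau}\|\phi\|_H^2 \le 2c_1 \|\phi\|_H^2$ with $\phi(0)=0$, whence $\phi \equiv 0$ by Gronwall.

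The hard part is the Garding/monotonicity estimate itself. The space $V$ controls only the weighted derivative $xq_x$---not $q_x$---so the coercivity in the $x$-direction must come entirely from the nonlinear term $(x^2G(x,y,q_x))_x$. Both the concavity of $G(x,y,\cdot)$ (giving the uniform lower bound $G_z\ge \tfrac{1}{2}a\rho$) and the vanishing $G(x,y,0) = 0$ (coming from $\inf_u f = 0$ in hypothesis (i)) are essential for the sharp inequality $zG(x,y,z)\ge \tfrac{a\rho}{2}z^2$; without either, the $\int x^2 G q_x$ integral would lack a coercive lower bound in $\|xq_x\|_H^2$, the Garding inequality would fail, and the usual monotone-operator machinery would be unavailable.
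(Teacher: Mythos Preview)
Your proposal is correct and follows essentially the same route as the paper: you recast the problem as an abstract evolution equation on the triple $V\subset H\subset V^\star$, and your three structural ingredients---Lipschitz continuity of $z\mapsto G(x,y,z)$ with $G(x,y,0)=0$, the monotonicity bound $G_z\ge\tfrac12 a\rho$ yielding $\int_Q x^2 G(x,y,q_x)q_x\,dx\,dy\ge\tfrac{a\rho}{2}\|xq_x\|_H^2$, and the resulting G\aa rding inequality and quasi-monotonicity---are exactly the properties (I)--(III) that the paper verifies for the operator $A$. The only difference is packaging: the paper, having checked (I)--(III), invokes directly the abstract existence/uniqueness theorem for nonlinear evolution equations of monotone type (citing \cite[p.~177]{Barbu}), whereas you unwind that abstract result into its standard proof via Faedo--Galerkin approximation, Aubin--Lions compactness, and the Minty--Browder identification step; the paper also records, as an alternative, the semigroup/Crandall--Liggett viewpoint after the same time reversal $q(t)=p(T-t)$ you use.
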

\begin{proof}
We can rewrite problem \eqref{eq:p}-\eqref{def:bc} as the backward infinite dimensional Cauchy problem
\begin{equation}
\label{eq:A}
\begin{aligned}
&\frac{d}{d t} p(t)-Ap(t)=0\,,\quad \text{a.e. } t\in(0,T)\,,\\
&p(T)=g_x\,,
\end{aligned}
\end{equation}
where $A:V\to V^\star$ is the nonlinear operator defined by
\begin{equation}
\label{def:A}
\begin{split}
(Az,\psi)=-\int_Q(\mu (xz)_x+k(\theta-y)z_y)\psi(x,y)\,dx\,dy+\frac{\sigma^2}{2}\int_Q z_{y}(y\psi(x,y))_y\,dx\,dy+\\
\hspace{2cm}+\int_Q x^2G(x,y,z_x)\psi_x(x,y)\,dx\,dy\,,\quad t\in[0,T], \quad z,\psi\in V\,.\\
\end{split}
\end{equation}

In order to apply the standard existence theory for the Cauchy problem \eqref{eq:A} (see e.g. \cite[p. 177]{Barbu}) we need to check the following properties for $A(t)$.
\begin{enumerate}[(I)]
\item \label{j} There is $\alpha_1\ge0$ such that
\[
\prescript{}{V^\star}{(Az-A\bar z,z-\bar z)}_V\ge-\alpha_1\norm{z-\bar z}^2_{H}\,,\quad\forall z,\bar z\in V\,.
\]
\item \label{jj} There is $\alpha_2>0$ such that
\[
\norm{Az}_{V^\star}\le\alpha_2\norm{z}_V\,,\quad\forall z\in V, t\in(0,T)\,.
\]
\item \label{jjj} There are $\alpha_3>0,\,\alpha_4\ge0$ such that
\[
\prescript{}{V^\star}{(A(t)z,z)}_V\ge\alpha_3\norm{z}^2_V-\alpha_4\norm{z}^2_H,\quad\forall z\in V, t\in(0,T)\,.
\]
\end{enumerate}
To check (I) we note that by \eqref{def:G} we have
\begin{equation}
\label{eq:Gfstar}
\begin{aligned}
G(x,y,z)&=-\sup_u\left\{-\frac{1}{2}uyz-f(x,u)\,; u\in[a,b]\right\}\\
&=-\tilde f^\star\left(x,-\frac{1}{2}yz\right)\quad\forall (x,y)\in Q, z\in\R\,,
\end{aligned}
\end{equation}
where $\tilde f^\star(x,v)$ is the convex conjugate of function $v\xrightarrow[]{\tilde f} f(x,v)+I_{[a,b]}(v)$ that is $\tilde f^\star (x,q)=\sup_v\set{qv-\tilde f(x,v); \quad v\in[a,b]}$ and $I_{[a,b]}(v)=0$ if $v\in[a,b]$, $I_{[a,b]}=+\infty$ otherwise. 
Given a convex, lower semicontinuous function $h\colon\R\to]-\infty,+\infty]$ denote by $\partial h(v)=h_v(v)$ the subdifferential of $h$ at $v$, that is
\[
\partial h(v)=\set{\eta\in\R : \eta(v-\bar v)\ge h(v)-h(\bar v)\,\quad\forall\bar v\in\R}\,,
\]
and recall that if $h^\star$ is the conjugate of $h$ then $\partial h^\star (q)=(\partial h)^{-1}(q)$, $\forall q\in\R$ and so $h^\star_q=\partial_qh^\star(\alpha q)=\alpha\partial h^\star(\alpha q)$, $\forall \alpha\in\R$, $q\in\R$.
This yields
\begin{equation}
\label{eq:Gz}
G_z(x,y,z)=\frac{1}{2}y\tilde f^\star_v\left(x,-\frac{1}{2}yz\right),\quad\forall (x,y)\in Q, z\in\R\,,
\end{equation}
where $G_z$ is the subdifferential of function $z\mapsto G(x,y,z)$.
On the other hand, we have
\begin{equation}
\label{eq:fsv1}
\tilde f^\star_v(x,v)=\left(f_u(x,\cdot)+N_{[a,b]}\right)^{-1}(v),\quad\forall v\in\R
\end{equation}
where $N_{[a,b]}(v)\subset2^\R$ is the normal cone to $[a,b]$ in $v$, that is
\begin{equation}
\label{def:N}
N_{[a,b]}(v)=\begin{cases}
\R^-\quad&\text{if $v=a\,,$}\\
0\quad&\text{if $a<v<b\,,$}\\
\R^+\quad&\text{if $v=b\,.$}
\end{cases}
\end{equation}
This yields
\begin{equation}
\label{eq:fsv2}
\tilde f^\star_v(x,v)\in[a,b]\,,\quad\forall x,v\in\R\,,
\end{equation}
and since $y\in[0,M]$, $a>0$, by \eqref{eq:Gz} we see that $G_z\ge 0$ 
and so the function $z\to G(x,y,z)$ is monotonically nondecreasing. By \eqref{def:A} it follows via integrations by part that (I) holds with some suitable $\alpha_1$.

To prove (II) we note that
\[
\int_Q x^2G(x,y,z_x)\psi_x(x,y)\,dx\,dy\le\norm{\psi}_V\left(\int_Q|xG(x,y,x)|^2\,dx\,dy\right)^\frac{1}{2}\,,
\]
while by \eqref{eq:Gz} and \eqref{eq:fsv2} we have that for all $(x,y)\in Q$, $v\in\R$,
\[
|G(x,y,v)|=|vG_v(x,y,\xi_v)|\le\frac{1}{2}Mb|v|\,,
\]
because by hypothesis (i) and \eqref{eq:Gfstar} it follows that $G(x,y,0)=0$.
Hence $\forall \psi,z\in V$
\[
\left\lvert\int_Q  x^2G(x,y,z_x)\psi_x(x,y)\,dx\,dy\right\lvert\le\frac{1}{2}Mb\int_Q  x^2\abs{z_x}\abs{\psi_x}\,dx\,dy\le C\norm{\psi}_V\norm{z}_V\,.
\]
Similarly, we have
\[
\left\lvert\int_Qz_y(y\psi)_y\,dx\,dy\right\lvert\le C\norm{\psi}_V\norm{z}_V\quad\forall \psi,z\in V
\]
and by \eqref{def:A} it follows (II).

Finally, taking into account that by \eqref{eq:Gz}-\eqref{eq:fsv1}, $G_z(x,y,\xi_z)\ge \frac{1}{2}ay>\frac{1}{2}a\rho$, we have
\begin{equation}
\label{eq:xzx}
\int_Q  x^2G(x,y,z_x)z_x\,dx\,dy=\int_Q  x^2G_z(x,y,\xi_z)z^2_x\,dx\,dy\ge \frac{1}{2}a\rho\int_Q  x^2z^2_x\,dx\,dy\,.
\end{equation}
We note also the inequalities
\begin{multline*}
\int_Qz_y(yz)_y\,dx\,dy = \int_Q\left( y |z_y|^2+\frac{1}{2}(z^2)_y\right)\,dx\,dy\ge\\
\ge\int_Q\left( y |z_y|^2\right)\,dx\,dy\ge\rho\int_Q\left(  |z_y|^2\right)\,dx\,dy
\end{multline*}
and
\begin{align*}
-\int_Q \left((xz)_x+k(\theta-y)z_y\right)z\,dx\,dy&=-\int_Q \left(\frac{1}{2}x(z^2)_x+z^2+\frac{k}{2}(\theta-y)(z^2)_y\right)\,dx\,dy\\
&\ge C\int_Q\left(z^2+y^2\right)\,dx\,dy\,.
\end{align*}
Here we have integrated by parts.
Together with \eqref{eq:xzx} the latter implies (III) as claimed.

Then we infer that the Cauchy problem \eqref{eq:A} has a unique solution $p$ which satisfies \eqref{eq:pL2} and this completes the proof of the theorem.
\end{proof}

An alternative approach to treat equation \eqref{eq:A} is the so called \emph{semigroup approach} we briefly present below.
 
We set $q(t)=p(T-t)$ and rewrite \eqref{eq:A} as the forward Cauchy problem
\begin{align}
\label{eq:Cpp}
&\frac{dq}{dt}+Aq=0\,,\quad t\in(0,T)\,,\\
&q(0)=g_x\,.
\end{align}
Define the operator $A_H:\mathcal{D}(A_H)\subset H \to H$ as
\[
A_Hq=Aq\,,\quad\forall q\in\mathcal{D}(A_H)=\set{q\in V : Aq\in H}\,.
\]
By (I)-(III) it follows  via Minty-Browder theory (see e.g. \cite{Barbu}) that the operator $A_H$ is quasi-m-accretive in $H\times H$, that is, there is $\eta_0\in\R$ such that
\[
(A_Hq-A_H\bar q,q-\bar q)\ge-\eta_0\norm{q-\bar q}_H^2\,,\quad\forall q,\bar q\in\mathcal{D}(A_H)\,,
\]
and
\[
R(\eta I+A_H)=H\,,\forall \eta>\eta_0\,,
\]
where $I$ is the identity operator and  $R$ indicates the range.
As a matter of fact by (I)-(III) it follows that $R(\eta I+A_H)\supset V^\star$ which clearly implies the latter.
Then, by the standard existence theory for the Cauchy problem associated with non-linear quasi-m-accretive  operators in Hilbert spaces, if $g_x\in\mathcal{D}(A_H)$ then there is a unique strong solution $q\in W^{1,\infty}([0,T];H)=\set{q\in L^\infty([0,T],H); \frac{dq}{dt}\in L^\infty([0,T],H)}$ to equation \eqref{eq:Cpp}, that is
\begin{equation}
\label{eq:Aq}
\begin{aligned}
&q\in L^2([0,T],V)\,,\quad Aq(t)\in L^\infty([0,T];H)\,,\\
&\frac{d^+}{dt} q(t)+A_H(q(t))=0\,,\quad t\in[0,T[\,,\\
&q(0)=g_x=q_0\,.
\end{aligned}
\end{equation}
Moreover, $q$ is expressed by the exponential Crandall\&Liggett formula
\begin{equation}
q(t)=\lim_{n\to\infty}\left(1+\frac{t}{n}A\right)^{-n}q_0
\end{equation}
 uniformly on $[0,T]$ (see e.g. \cite{Barbu}, p. 139).
 This means that the solution $q$ is the limit of the finite difference scheme $$q(t)=\lim_{h\to0}q_h(t)\quad\forall t\in[0,T]$$
where
\begin{equation}
\label{eq:qh}
\begin{aligned}
& q_h(t)=q^i_h\,\quad t\in[ih, (i+1)h)\,,\\
& q^{i+1}_h+hAq^{i+1}_h=q^i_h\,,\quad i=0,1,\dots,N=\left[\frac{T}{h}\right]-1\,,\\
& q^0_h=q_0\,.
\end{aligned}
\end{equation}
For $q_0\in H$ the function $q$ given by \eqref{eq:qh} is a generalized mild solution to \eqref{eq:Cpp}.
This scheme may be used to numerical approximation of equation \eqref{eq:p}. Moreover, this reveals that for $g$ regular the solution $p$ of equation \eqref{eq:p} is locally in $H^2(Q)$.

We note also that $q_0\to q(t)$ (for instance in $H^2(Q)$) is a semigroup of quasi contraction on $H$, that is
\[
\norm{q(t,q_0)-q(t,\bar q_0)}_H\le\exp(\eta_0 t)\norm{q_0-\bar q_0}_H\quad\forall t\ge0\,,\quad q_0,\bar q_0\in H\,.
\]

Now coming back to equation~\eqref{eq:phi}, we get by Theorem~\ref{thm:uws} 
\begin{corollary}\label{cor:us}
Let $g_x\in L^2(\R)$. Then equation \eqref{eq:phi},\eqref{eq:phif} has a weak solution in the sense of \eqref{eq:phiL2}-\eqref{eq:phiint}. This solution is unique up to an additive function $\tilde\varphi\equiv \tilde\varphi(t,y)$.
\end{corollary}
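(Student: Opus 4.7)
The plan is to deduce this corollary directly from Theorem~\ref{thm:uws} via the primitive construction indicated in the paragraph just before the statement. Given $g_x\in L^2(\R)$, let $p$ denote the unique weak solution to \eqref{eq:p}-\eqref{def:bc} supplied by Theorem~\ref{thm:uws}, and define
\[
\varphi(t,x,y)=\int_{x_0}^{x} p(t,\xi,y)\,d\xi + h(y),
\]
where $x_0\in\R$ is a fixed reference point and $h$ is chosen so that the terminal identity $\varphi(T,x,y)=g(x)$ holds on any bounded $x$-slab (note that $g_x\in L^2(\R)$ determines $g$ up to an additive constant, so this matching is possible). The regularity statement~\eqref{eq:phiL2} for $\varphi_x$ is then inherited verbatim from Theorem~\ref{thm:uws}, while $\varphi\in L^2([0,T];L^2_{\text{loc}}(\R\times\R))$ follows by Fubini from the fact that $\varphi$ is, on any bounded box, a finite $x$-integral of the $L^2([0,T];H)$ function $p$.

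Next I would verify the weak equation \eqref{eq:phiint} and the boundary/terminal data \eqref{eq:phif}. By construction one has $\varphi_{xx}=p_x$ and $\varphi_{xy}=(\varphi_x)_y=p_y$ as distributions on $(0,T)\times Q$; substituting these identifications into the weak formulation of Definition~\ref{def:weaksol} for $p$ produces exactly the identity~\eqref{eq:phiint}. The boundary conditions $\varphi_x(t,x,\rho)=\varphi_x(t,x,M)=0$ transcribe directly from~\eqref{def:bc}, and the terminal condition has been built into the definition of $\varphi$. This establishes the existence half of the corollary.

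For the uniqueness statement, suppose $\varphi_1$ and $\varphi_2$ are two weak solutions in the sense of \eqref{eq:phiL2}-\eqref{eq:phiint}-\eqref{eq:phif}. Setting $p_i:=\varphi_{i,x}$ for $i=1,2$, I would check that each $p_i$ is a weak solution of \eqref{eq:p}-\eqref{def:bc}: the PDE~\eqref{eq:p} for $p_i$ is read off from \eqref{eq:phiint}, the boundary data come from $\varphi_{i,x}(t,x,\rho)=\varphi_{i,x}(t,x,M)=0$, and the terminal condition $p_i(T,x,y)=g_x(x)$ follows by differentiating $\varphi_i(T,\cdot,y)=g$ in $x$. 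By the uniqueness half of Theorem~\ref{thm:uws}, $p_1=p_2$, hence $\tilde\varphi:=\varphi_1-\varphi_2$ has vanishing distributional $x$-derivative on $(0,T)\times Q$ and therefore depends only on $t$ and $y$, which is precisely the stated uniqueness up to an additive function $\tilde\varphi(t,y)$.

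The only nontrivial step is confirming that the two weak formulations~\eqref{eq:pL2} and~\eqref{eq:phiint} are genuinely equivalent under the correspondence $\varphi_x\leftrightarrow p$, i.e. that every admissible $\psi\in V$ in \eqref{eq:phiint} can be obtained (or approximated) as the $x$-derivative of an admissible test function in the formulation for $\varphi$, and conversely. This is a routine density/integration-by-parts argument on the unbounded strip $Q=\R\times(\rho,M)$; once it is in hand, the corollary follows mechanically from Theorem~\ref{thm:uws} with no further estimates required.
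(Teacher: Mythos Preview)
Your proposal is correct and follows essentially the same route as the paper: the corollary is presented there as an immediate consequence of Theorem~\ref{thm:uws} via the correspondence $\varphi_x\leftrightarrow p$ spelled out in the paragraph preceding the theorem (the paper integrates from $\infty$ rather than from a fixed $x_0$, but that is inessential). The paper gives no further argument beyond that paragraph, so your write-up simply fills in the details the authors left implicit.
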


\begin{remark}
In literature, dynamic programming equations of the form~\eqref{eq:phi} have been treated so far in the framework of viscosity solutions (see, e.g., \cite{Fleming}, \cite{Soner}, \cite{Heston}). 
The main advantage of Theorem~\ref{thm:uws} and respectively of Corollary~\ref{cor:us} is that are obtained here a weak solution $\varphi$ which is however sufficiently regular to represent the feedback controller $u$ of problem (P) in explicit terms \eqref{def:phi}.
\end{remark}
\section{The optimal feedback controller}\label{sec:optimalfeedback}
Now coming back to \eqref{def:opt}-\eqref{def:phi} one might suspect that the feedback controller $u^\star$ defined by
\begin{equation}
\label{eq:us}
u^\star(t)=\phi(t,X_1(t),X_2(t))\quad t\in(0,T)\,,
\end{equation}
where
\begin{equation}
\phi(t,x,y)=\arg\min_{u\in[a,b]}\left\{\frac{1}{2}uy p_x(t,x,y)+f(x,u)\right\}\,,\quad\forall(x,y)\in Q\,,
\end{equation}
is optimal in problem (P) for $(X_1,X_2)\in(0,T)\times Q$, where $p$ is the weak solution to equation \eqref{eq:p}-\eqref{def:bc}.
Since $p_x\in L^2(Q)$, the mapping $\phi$ is well defined and (see \eqref{eq:fsv1}) we have
\begin{equation}
\label{eq:phi2}
\phi(t,x,y)=\left(f_u(x,\cdot)+N_{[a,b]}\right)^{-1}\left(-\frac{1}{2}yp_x(t,x,y)\right),\quad\forall t\in[0,T]\,,(x,y)\in Q\,,
\end{equation}
where $f_u(x,\cdot)$ is the subdifferential of function $u\mapsto f(x,u)$.
The corresponding closed loop system \eqref{sys:model} is
\begin{equation}
\label{sys:cl}
\begin{cases}
dX_1=\mu X_1\,dt+X_1\sqrt{\phi(t,X_1,X_2) X_2}\,dW_1\,,\quad t\in(0,T)\,\\
dX_2=k(\theta-X_2)\,dt+\sigma\sqrt{X_2}\,dW_2\,,\quad t\in(0,T)\,,\\
X_1(0)=X_1^0\,,\quad X_2(0)=X_2^0\,.
\end{cases}
\end{equation}
The existence of a strong solution $(X_1,X_2)$ to \eqref{sys:cl} would imply by a standard computation that the map $u^\star=\phi(t,X_1,X_2)$ is indeed an optimal feedback controller for problem (P). However the existence of a strong solution for \eqref{sys:cl} is a delicate problem and the best one can expect in this case is a martingale solution. To this end we assume in addition  beside above hypotheses that
\begin{enumerate}[(ii)]
\item $u\mapsto f_u(x,u)$ is strictly monotone.
\end{enumerate}
\begin{theorem}
\label{thm:ocsm}
Assume that \textnormal{(i),(ii)} and \eqref{eq:ic} hold. Then there is a martingale solution $(X_1,X_2)$ to equation \eqref{sys:cl}.
\end{theorem}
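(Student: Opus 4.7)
The strategy mirrors the proof of Theorem~\ref{thm:existence}: regularize the system \eqref{sys:cl} at its two sources of irregularity (the square-root singularity $\sqrt{X_2}$ and the low-regularity feedback $\phi$), solve the smoothed version strongly, derive uniform moment and equicontinuity estimates, obtain tightness of the laws, invoke Skorohod's theorem, and finally identify the limit as a martingale solution.

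For the feedback, the plan is to mollify $p_x(t,\cdot,\cdot)$ by a standard smooth kernel $\rho_\varepsilon$, producing $p_x^\varepsilon\in C^\infty$ bounded uniformly on compact subsets of $Q$, and to set
\[
\phi^\varepsilon(t,x,y) = \bigl(f_u(x,\cdot)+N_{[a,b]}\bigr)^{-1}\!\bigl(-\tfrac12 y\,p_x^\varepsilon(t,x,y)\bigr).
\]
Hypothesis (ii) makes $u\mapsto f_u(x,u)$ strictly monotone, so this inverse is single-valued and continuous with range in $[a,b]$; hence $\phi^\varepsilon$ is continuous and uniformly bounded in $[a,b]$. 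Replacing also $\sqrt{X_2}$ by $X_2/\sqrt{|X_2|+\varepsilon}$ as in \eqref{eq:eps}, the resulting system has locally Lipschitz coefficients and admits a strong solution $(X_1^\varepsilon,X_2^\varepsilon)$ with $X_2^\varepsilon\ge 0$ by the It\^o-$(x^-)^2$ computation under \eqref{eq:ic}, while $X_1^\varepsilon$ is given by the exponential formula \eqref{eq:Xe} with $u$ replaced by $\phi^\varepsilon(\cdot,X_1^\varepsilon,X_2^\varepsilon)$.

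Because $\phi^\varepsilon\in[a,b]$ uniformly in $\varepsilon$, the a priori estimates \eqref{supXe}--\eqref{stXe} carry over without change, and tightness of $\{\mathscr{L}(X_1^\varepsilon,X_2^\varepsilon)\}$ on $C([0,T];\mathbb{R}^2)$ follows as in the proof of Theorem~\ref{thm:existence} via the compact Ascoli-Arzel\`a sets $\Gamma_{r,\gamma}$. Skorohod's theorem then provides, on a new probability space $(\tilde\Omega,\tilde{\mathcal F},\tilde{\mathbb P})$, processes $(\tilde X_1^\varepsilon,\tilde X_2^\varepsilon,\tilde W^\varepsilon)$ with the original joint laws and an a.s.\ subsequential limit $(\tilde X_1,\tilde X_2,\tilde W)$ in $C([0,T];\mathbb{R}^4)$. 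Passing to the limit in the drifts and in the $X_2$-equation is routine; the main obstacle is identifying the limit of the stochastic integral
\[
\int_0^t \tilde X_1^\varepsilon(s)\sqrt{\phi^\varepsilon(s,\tilde X_1^\varepsilon(s),\tilde X_2^\varepsilon(s))\,\tilde X_2^\varepsilon(s)}\,d\tilde W_1^\varepsilon(s)
\]
with the analogous expression built from $\phi$ and the limiting processes.

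To handle this obstacle I would exploit the non-degeneracy of the $X_2$-dynamics. Under \eqref{eq:ic} the Feller square-root diffusion admits, for every $t>0$, an absolutely continuous marginal with density bounded uniformly in $\varepsilon$ on compact subsets of $(\rho,M)$; combined with $X_1^\varepsilon$ this yields a Krylov-type estimate of the form $\tilde{\mathbb E}\int_0^T |h(s,\tilde X_1^\varepsilon(s),\tilde X_2^\varepsilon(s))|\,ds\le C\|h\|_{L^p(Q\times[0,T])}$ for suitable $p<\infty$. Applied to $h=|p_x-p_x^\varepsilon|^2$ it gives $p_x^\varepsilon(\cdot,\tilde X_1^\varepsilon,\tilde X_2^\varepsilon)\to p_x(\cdot,\tilde X_1,\tilde X_2)$ in $\tilde{\mathbb P}\otimes dt$-measure, and continuity of the subdifferential inverse (ensured by (ii)) promotes this to $\phi^\varepsilon(\cdot,\tilde X_1^\varepsilon,\tilde X_2^\varepsilon)\to\phi(\cdot,\tilde X_1,\tilde X_2)$ in measure; the $[a,b]$-bound upgrades convergence to $L^q$ for every $q<\infty$ by dominated convergence. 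Together with the a.s.\ uniform convergence of $\tilde X^\varepsilon$ and $\tilde W^\varepsilon$, the standard continuity of stochastic integration (cf.\ \cite{DaPrato}) passes to the limit in the stochastic integral and shows that $(\tilde X_1,\tilde X_2,\tilde W)$ is a martingale solution to \eqref{sys:cl}. The hardest step is precisely this density/Krylov estimate, since $p_x$ is only in $L^2$ yet must be evaluated at the random points $(\tilde X_1(t),\tilde X_2(t))$; hypothesis (ii) plays a crucial role here by guaranteeing the single-valuedness and continuity that make the limiting identification possible.
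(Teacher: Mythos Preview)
Your overall strategy---regularize, solve strongly, get uniform bounds, use Skorohod---matches the paper's, but you diverge from the paper at the key technical point of how to handle the low regularity of $\phi$. You mollify $p_x$ and plan to recover the limit via a Krylov-type density estimate; the paper instead \emph{upgrades the regularity of $p$ itself}. Specifically, the paper invokes interior H\"older regularity for quasilinear parabolic equations (Lady\v{z}enskaja--Solonnikov--Ural'ceva, Theorem~6.1) to get $p\in H^{2+\alpha,1+\alpha/2}_{\mathrm{loc}}$, so that $p_x$ is locally Lipschitz in $x$. Combined with hypothesis~(ii), which the paper reads as making $(f_u(x,\cdot)+N_{[a,b]})^{-1}$ Lipschitz, this yields a direct H\"older/Lipschitz-type bound
\[
\bigl|\psi(t,x)-\psi(t,\bar x)\bigr|\le C\sqrt{X_2(t)}\,|x-\bar x|+K\,X_2(t)\sqrt{|x-\bar x|}
\]
for $\psi(t,x)=x\sqrt{\phi(t,x,X_2(t))X_2(t)}$. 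With this estimate in hand the only approximation needed is the square-root smoothing \eqref{eq:eps} already used in Theorem~\ref{thm:existence}, and the passage to the limit is identical to that proof; no mollification of $p_x$ and no Krylov argument are required.

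The advantage of the paper's route is that it sidesteps the step you yourself flag as hardest. Your Krylov estimate is not routine here: the diffusion matrix of $(X_1^\varepsilon,X_2^\varepsilon)$ is $\mathrm{diag}\bigl((X_1^\varepsilon)^2\phi^\varepsilon X_2^\varepsilon,\ \sigma^2 X_2^\varepsilon\bigr)$, which degenerates both as $X_1\to 0$ and as $X_2\to 0$, so the standard Krylov bound $\E\int_0^T|h(s,X^\varepsilon_s)|\,ds\le C\|h\|_{L^p}$ is not available off the shelf, and the marginal density of $X_2$ alone does not control integrals of functions of the joint process. Unless you can produce such an estimate uniformly in $\varepsilon$, the identification of the limiting stochastic integral remains open in your scheme. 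The paper's use of parabolic regularity avoids this entirely; that is the idea you are missing.
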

\begin{proof}
The proof is similar to that of Theorem~\ref{thm:existence} and so it will be sketched only.
Consider the map $\psi\colon[0,T]\times\R\times\Omega \to \R$
\[
\psi(t,x)=x\sqrt{\phi(t,x,X_2(t))X_2(t)}\,,\quad t\in[0,T],\,x\in\R\,.
\]
By \eqref{eq:phi2} it follows that for all $R>0$, $\abs{x}+\abs{\bar x}\le R$, $\mathbb{P}$-a.s.
{\small
\begin{equation}
\label{eq:bounds}
\begin{aligned}
\abs{\psi(t,x)-\psi(t, \bar x)}&=\abs{x\sqrt{\phi(t,x,X_2(t))X_2(t)}-\bar x\sqrt{\phi(t,\bar x,X_2(t))X_2(t)}}\\
&\le\sqrt{X_2(t)}\left( \abs{x-\bar x}\sqrt{\phi(t,x,X_2(t))} +\abs{\bar x}\sqrt{\abs{\phi(t, x,X_2(t))-\phi(t,\bar x,X_2(t))}}\right)\\
&\le C\sqrt{X_2(t)}\abs{x-\bar x}+R \sqrt{X_2(t)}\sqrt{L_\phi\frac{1}{2}X_2(t)\abs{p_x(t,x,X_2(t)) +p_x(t, x,X_2(t))}}\\
&\le C\sqrt{X_2(t)}\abs{x-\bar x}+R X_2(t)\sqrt{L_\phi}\sqrt{L^1_R\abs{x-\bar x}}\\
&\le C\sqrt{X_2(t)}\abs{x-\bar x}+K X_2(t) \sqrt{\abs{x-\bar x}}
\end{aligned}
\end{equation}
}
and 
\begin{equation}
\label{eq:bounds2}
\abs{\psi(t,x)}\le C\abs{x}\sqrt{X_2(t)}\,.
\end{equation}
Indeed, by (ii) the map $\left(f_u(x,\cdot)+N_{[a,b]}\right)^{-1}$ is Lipschitz on $\R$ while
\[
\abs{p_x(t,x)-p_x(t,\bar x)}\le L^1_R\abs{x-\bar x}\,,\quad\text{for }\abs{x}+\abs{\bar x}\le R\,.
\]
The latter is a consequence of high order regularity of solutions to quasilinear parabolic equation (see \cite{Ladyzenskaya}, Theorem 6.1, p 452) which implies that
\[
p\in H^{2+\alpha, 1+\frac{\alpha}{2}}((0,T)\times Q_R)\quad\forall R>0
\]
where $\alpha\in(0,1)$ and $Q_R=(0,R)\times(0,M)$.
On the other hand again by \eqref{eq:phi2} we see that $\phi(t,x,y)\in[a,b]$ $\forall x,y\in\R$, $t\in[0,T]$, and so \eqref{eq:bounds} follows.

We approximate \eqref{sys:cl} by (see \eqref{eq:eps})
\begin{equation}\label{sys:cl2}
\begin{aligned}
&dX^\e_1=\mu X^\e_1\,dt-X^\e_1\sqrt{\phi(t,X^\e_1,X^\e_2) X^\e_2}\,dW_1\,,\\
&dX^\e_2=k(\theta-X^\e_2)\,dt+\sigma\frac{X^\e_2}{\sqrt{\abs{X^\e_2}+\e}}\,dW_2\,,\\
&X^\e_1(0)=X_1^0\,,\quad X^\e_2(0)=X_2^0\,.
\end{aligned}
\end{equation}
Arguing as in the proof of Theorem~\ref{thm:existence} it follows by \eqref{eq:bounds}-\eqref{eq:bounds2} that \eqref{sys:cl2} has a unique solution $(X^\e_1,X^\e_2)$, $X^\e_1,X^\e_2\ge 0$, $\mathbb{P}$-a.s.
Moreover, one obtains also in this case estimates \eqref{supXe}-\eqref{stXe} and so by the Skorohod theorem it follows as above the existence of a martingale solution $(\tilde X_1,\tilde X_2)$ satisfying system~\eqref{sys:cl} in a probability space $(\tilde\Omega,\mathcal{\tilde F},\mathbb{\tilde P},\tilde W_1,\tilde W_2)$.
\end{proof}

\begin{remark}
Roughly speaking Theorem~\ref{thm:ocsm} amounts to saying that there is a probability space $(\tilde\Omega,\mathcal{\tilde F},\mathbb{\tilde P},\tilde W_1,\tilde W_2)$ where the closed loop system corresponding feedback controller $\tilde u^\star=\phi(t,\tilde X_1,\tilde X_2)$ has a solution $(\tilde X_1,\tilde X_2)$. This means that the feedback controller $u^*$ is admissible in problem (P) though it is not clear if it is optimal. However this is a suboptimal feedback controller. Indeed, if one constructs in a similar way the feedback controller $u_\e=\phi_\e(X^\e_1,X^\e_2)$ for problem (P) with state system~\eqref{sys:cl2} and $\phi_\e$ is the solution to the corresponding equation \eqref{eq:phiL2}-\eqref{eq:phif} then $u_\e$ is optimal for the approximating optimal control problem and it is convergent in law in the above sense to a controller $u^*$ as found above.
\end{remark}

\begin{example}
Let $f(x,u)\equiv0$. Then equation \eqref{eq:p} reduces to
\begin{equation}
\label{eq:pex2}
\begin{aligned}
&p_t+\mu(xp)_x+k(\theta-y)p_y+\frac{\sigma^2}{2}yp_{yy}+ y\left(x^2(aH(p_x)+bH(-p_x))\right)_x=0\,,\: x\in\R\,,\, y\in(0,M)\,,\\
&p(T,x,y)=g_x(x)\,,
\end{aligned}
\end{equation}
while by \eqref{eq:phi2} the feedback controller $u^\star$ (see \eqref{eq:us}) is given by
\begin{equation}
u^\star(t)=
\begin{cases}
a\quad&\text{if $p_x(t,X_1(t),X_2(t))>0$,}\\
b\quad&\text{if $p_x(t,X_1(t),X_2(t))<0$,}\\
\in(a,b)\quad&\text{if $p_x(t,X_1(t),X_2(t))=0$.}
\end{cases}
\end{equation}
In \eqref{eq:pex2} $H$ is the Heaviside function.

It should be said however that in this case Theorem~\ref{thm:ocsm} does not apply and so parabolic equation \eqref{eq:pex2} has only a weak solution $p$ in the sense of \eqref{eq:pL2}-\eqref{eq:pf}  and so $p$ is not sufficiently regular to assume existence of the closed loop system \eqref{eq:pex2}. 
However, if for almost all $\omega\in\Omega$ the set $\Sigma=\set{t : p_x(t, X^\star_1(t),X^\star_2(t))=0}$ is finite (as it is expected to be) then the controller $u^\star$ is a bang-bang controller with $\Sigma$ as set of switch points. This fact might be lead to a simplification of control problem (P) by replacing the set $\mathcal{U}_0$ of admissible controllers $u\colon[0,T]\to\R$ by $$\mathcal{\tilde U}_0=\set{u\colon[0,T]\to\R\,,\mathcal{F}_t-\text{adapted},\, u(t)=\sum_{i=0}^{N-1} v_i\chi_{[t_i,t_{i+1}]}(t)}\,. $$
Here $t_0=0<t_1<t_2<\dots<t_N=T$ is a given partition of interval $[0,T]$ while $v_i\colon\Omega\to\R$ are $\mathcal{F}_{t_i}$-measurable functions. More precisely we can consider the stochastic optimal control problem 
\begin{equation}
\text{Minimize } \E g(X(T))
\end{equation}
subject to \eqref{sys:model} and $u\in\mathcal{\tilde U}_0$ where $v_j\in[a,b]$, $\forall j=0,1,\dots,N-1$.
\end{example}

\section{Concluding comments}\label{Sec:conclusion}
In the present paper we have studied an optimization problem where the performance criterion is a function of a variation of the celebrated Heston model, where an additional control process has been taken into account, namely
\begin{gather*}
\min_{u\in\mathcal{U}}\E\int_0^T X_1^2(t)f(X_1(t),u(t))\,dt+\E g(X_1(T))\,,\\
s.t.
\begin{cases}
dX_1=\mu X_1\,dt+X_1\sqrt{u X_2}\,dW_1\,,\quad t\in(0,T)\,\\
dX_2=k(\theta-X_2)\,dt+\sigma\sqrt{X_2}\,dW_2\,,\quad t\in(0,T)\,\\
X_1(0)=X_1^0\,,\quad X_2(0)=X_2^0\,.
\end{cases}
\end{gather*}
Under suitable conditions, we have established the well posedness of the control problem via an approximation-technique. Moreover,  we study the associated HJB equation, also discussing the existence of an optimal feedback controller for this problem.

\end{document}